\theoremstyle{plain}
\newtheorem{lem}{Lemma}[section]
\newtheorem{thm}[lem]{Theorem}
\newtheorem{prop}[lem]{Proposition}
\newtheorem{coro}[lem]{Corollary}
\theoremstyle{definition}
\newtheorem{exa}[lem]{Example}
\newtheorem{rem}[lem]{Remark}
\newtheorem{defn}[lem]{Definition}
\numberwithin{equation}{section} \thispagestyle{empty} \voffset
\begin{document}
	\baselineskip 18truept
	\title{On the idempotent graph of matrix ring}
	\author[A.A. Patil, P.S. Momale and C.M. Jadhav]%
	{Avinash Patil$^{1,a}$, P.S. Momale$^b$ and C.M. Jadhav$^c$}	
	\address{\rm $A$ Department of Mathematics, JET's Z. B. Patil College, Dhule-424 002, India.}
	\email{\emph{avipmj@gmail.com}}
	\address{\rm	
	$B$ Department of Mathematics, P. V. P. College, Pravaranagar-413713, India}
	\email{\emph{psmomale@gmail.com}}
		\address{\rm	
		$C$ Department of Mathematics, S.V.S Dadasaheb Rawal College, Dhondaicha, India}
		\email{\emph{jchandrakant65@gmail.com}}

	\maketitle
	
	 \footnotetext[1]{Corresponding author}
	 
	\begin{abstract}Let $\mathbb{F}$ be a finite field and $R = M_2(\mathbb{F})$ be $2 \times 2$ matrix ring over $\mathbb{F}$. In this paper,
		we explicitly determine all the idempotents in $R$. Using these idempotents, we study the
		idempotent graph of $R$ whose vertex set is the set of non-trivial idempotents in $R$ and two
		idempotents $e, f$ are adjacent if $ef = 0$ or $fe = 0$. It is proved that the idempotent graph of
		$R$ is connected regular graph with diameter 2. Its girth is also characterized. Further, we determine the Wiener and Harary index of the idempotent graph
		of $R$. 
\end{abstract}
	\maketitle
{\bf Keywords:} idempotent elements, idempotent graph, Winer Index, Harary Index

{\bf MSC(2010):}{05C25,05C15}
\section{Introduction}
All the rings in this paper are associative, having unity and all graphs are simple. An element $h\in R$ such that $h^2 = h$ is an \textit{idempotent} and it is a \textit{central idempotent} if it commutes with each element of $R$. Two idempotents $h$ and $k$ are {\it orthogonal} if $hk = kh = 0$. In any ring with unity, 0 and 1 are idempotents called as trivial idempotents. Let $Id(R)$ be the set of idempotents in $R$. 
In 1988, Beck \cite{10} introduced the zero-divisor graph $\Gamma(R)$ of a commutative ring $R$ and conjectured that $\Gamma(R)$ is weakly perfect whenever $\omega(\Gamma(R)) < \infty$. He proved that reduced rings and principal ideal rings are the ones for which the conjecture is true. However, Anderson et al. \cite{ddn} gave a counterexample of a commutative local ring  for which the
conjecture is not true.  In \cite{7}, Anderson et al. modified Beck's definition of zero-divisor graph of a commutative ring $R$ to the now standard definition: $\Gamma(R)$ is the simple graph with vertices the nonzero zero-divisors of $R$, and vertices $x$ and $y$ are adjacent if $xy=0$. 

Cvetko-Vah et al. \cite{cd} assigned a simple graph $G(R)$ to $R$ whose vertex set is $Id(R)$, and two vertices $e$ and $f$ are adjacent if and only if :
\begin{enumerate}
	\item $ef = f e = 0$, and
	\item  $eRf \ne  0$ or $fRe\ne 0$.
\end{enumerate}
It is evident from the second condition that if the idempotents of $R$ are central, then $G(R)$ has no edges. Akbari et al. \cite{sa} defined  the {\it idempotent graph} $I(R)$ of a ring $R$ as the graph whose vertices are the nontrivial idempotents of $R$, and two distinct vertices $h$ and $k$ are adjacent if and only if $hk=kh=0$.  Observe that $h+k$ is an idempotent of $R$ whenever $h$ and $k$ are orthogonal idempotents, which is a notable algebraic property. Clearly, for a commutative ring, $I(R)$ is a subgraph of $\Gamma(R)$. The interplay between the algebraic properties of $R$ and graph-theoretic properties of $I(R)$ has been studied in \cite{sa, cd}. For example, if $D$ is a division ring, then the clique number of
$I(M_n (D)) (n\geq 2)$ is $n$, and for any commutative Artinian ring $R$ the clique number and
the chromatic number of $I(R)$ are equal to the number of maximal ideals of $R$. Also, for a division ring $D$, it proved that $diam(I(M_n (D))) = 4$ for all natural numbers $n\geq 4$ and
$diam(I(M_3 (D))) = 5$. Patil et al. \cite{ap} studied the weak perfectness of $I(R)$ and gave its applications to zero-divisor graphs.

 Let $R = M_2(\mathbb{F})$ be $2 \times 2$ matrix ring over $\mathbb{F}$, where  $\mathbb{F}$ is a finite field. In this paper, we explicitly determine all the idempotents in $R$ in terms of elements of $\mathbb{F}$. Using these idempotents, we study the variation of 
idempotent graph of $R$ whose vertex set is the set of non-trivial idempotents in $R$ and two idempotents $e, f$ are adjacent if $ef = 0$ or $fe = 0$. It is prove that the idempotent graph of
$R$ is connected regular graph with diameter 2. Its girth is also characterized. Further, we determine the Wiener index and Harary index of the idempotent graph of $R$.

We begin with the necessary concepts and terminology. For the vertices $a$ and $b$  of a graph $G$, the \textit{distance} $d(a,b)$ between $a$ and $b$ is the number
of edges in the shortest path between $a$ and $b$. The largest
distance among all distances between pairs of the vertices of
a graph $G$ is the \textit{diameter} of $G$, denoted by $diam(G)$.
A graph $G$ is \textit{connected} if for any vertices $x$ and $y$ of $G$ there is a path between $x$ and $y$. For $x\in V(G)$-the set
of vertices of $G$, the set of neighbors of $x$ in $G$ is denoted
by $N(x) = \{y \in V(G) ~|~ y \textnormal{ is adjacent to } x \textnormal{ in } G\}$. The \textit{girth}
of $G$ is the length of the shortest cycle in $G$ and is denoted
by $gr(G)$. If $G$ has no cycles, then the girth of $G$ is infinite. The \textit{degree} of a vertex $v$ in $G$, denoted by $d(v)$, is the number of vertices adjacent to $v$ in $G$. A graph $G$ is {\it regular} if each of its vertex have the same degree. A graph is {\it complete} if any two of its vertices are adjacent. The complete graph on $m$ vertices is denoted by $K_m$. Henceforth we use $a\sim b$ to denote the vertices $a$ and $b$ are adjacent, $\mathbb{F}$ to denote a finite field with $n=|\mathbb{F}|$, $\boldsymbol{0}$ to denote $2\times 2$ zero matrix and $\boldsymbol{1}$ to denote $2\times 2$ identity matrix.

\section{Idempotent in Matrix ring} 
First we determine the number of idempotents in $R$.
\begin{thm}\label{t1}
Let $R=M_2(\mathbb{F})$. Then $R$ contains exactly $n^2+n+2$ idempotents. 
\end{thm}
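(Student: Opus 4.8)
The plan is to reduce the idempotent equation $e^2 = e$ to scalar conditions on the entries of $e$ and then count directly. Writing a general element as
$$e = \begin{pmatrix} a & b \\ c & d \end{pmatrix}, \qquad a,b,c,d \in \mathbb{F},$$
I would expand $e^2 = e$ entrywise to obtain the system $a^2 + bc = a$, $b(a+d) = b$, $c(a+d) = c$, and $bc + d^2 = d$. The two off-diagonal equations read $b(a+d-1)=0$ and $c(a+d-1)=0$, which forces a dichotomy: either $a + d = 1$, or else $b = c = 0$.

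First I would dispose of the degenerate branch in which $b = c = 0$ while $a + d \neq 1$. Here the diagonal equations become $a^2 = a$ and $d^2 = d$, so $a, d \in \{0,1\}$; combined with $a + d \neq 1$ this yields exactly the two trivial idempotents $\boldsymbol{0}$ and $\boldsymbol{1}$. The main branch is $a + d = 1$, i.e.\ $d = 1 - a$. Substituting this into the diagonal equations, both collapse to the single condition $bc = a(1-a)$ (one verifies the two are equivalent once $d = 1-a$). Thus the non-trivial idempotents are precisely the matrices of trace $1$ and determinant $0$; Cayley--Hamilton confirms that these conditions are exactly the ones characterizing the non-trivial idempotents, since $e^2 - (\mathrm{tr}\,e)e + (\det e)\boldsymbol{1} = \boldsymbol{0}$.

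The count then reduces to a case analysis on the value of $a(1-a)$. When $a \notin \{0,1\}$ the product $a(1-a)$ is nonzero, so $b$ may be any of the $n-1$ nonzero elements and $c = a(1-a)/b$ is determined, giving $n-1$ pairs; as there are $n-2$ such values of $a$, this contributes $(n-2)(n-1)$ matrices. When $a \in \{0,1\}$ the condition becomes $bc = 0$, whose solution set $\{b = 0\} \cup \{c = 0\}$ consists of $2n - 1$ pairs, contributing $2(2n-1)$ matrices. Summing, the number of non-trivial idempotents is
$$(n-2)(n-1) + 2(2n-1) = n^2 + n,$$
and adding the two trivial idempotents gives $n^2 + n + 2$.

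The computation is routine; the point requiring care is the bookkeeping in the final count, in particular splitting the range of $a$ according to whether $a(1-a)$ vanishes and correctly counting the locus $bc = 0$ as $2n - 1$ rather than $2n$ (the origin $b = c = 0$ lies on both coordinate lines). I would also double-check that the two diagonal idempotents $\mathrm{diag}(1,0)$ and $\mathrm{diag}(0,1)$ are captured exactly once, inside the $a+d=1$ branch, so that there is no overlap with the trivial idempotents counted in the degenerate branch.
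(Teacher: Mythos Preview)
Your proof is correct and follows essentially the same approach as the paper. The paper first uses $(\det A)^2=\det A$ to isolate the identity and then invokes the trace to obtain $a+d=1$ for the remaining nonzero idempotents, whereas you reach the same condition $bc=a(1-a)$ by direct entrywise expansion; the subsequent case split on whether $a\in\{0,1\}$ and the final tally $(n-2)(n-1)+(4n-2)+2=n^2+n+2$ are identical.
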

 \begin{proof}Let $R=M_2(\mathbb{F})$, where $\mathbb{F}$ is a finite field.  Then $R$ is vector space over $\mathbb{F}$. Let $A=\left[\begin{array}{lr}
 a & b\\
 c & d
 \end{array}\right]$ be an idempotent in $R$, i.e $A^2=A$, which gives $(\det A)^2= \det A$, i.e. $\det A$ is an idempotent in $\mathbb{F}$. Hence $\det A\in \{0, 1\}$. If $\det A = 1$, then $A$ is invertible in $R$. This together with $A^2=A$ gives $A=\left[\begin{array}{lr}
  1 & 0\\
  0 & 1
  \end{array}\right]$. Suppose that $\det A =0$. Then $ad-bc=0$. If $A$ is non-zero, then $A^2-A=\boldsymbol{0}$, which gives $tr(A)=1$, i.e. $a+d=1$ in $\mathbb{F}$. Then $ad=bc$ gives $a(1-a)=bc$. Now there are following two cases.\\
  {\bf Case 1.} Since $a(1-a)=bc$, we must have either $b=0$ or $c=0$ if and only if $a\in \{0, 1\}$. Hence $A$ is of the form $\left[\begin{array}{lr}
0 & 0\\
c & 1
\end{array}\right],~ \left[\begin{array}{lr}
 0 & b\\
 0 & 1
 \end{array}\right],~ \left[\begin{array}{lr}
 1 & 0\\
 c & 0
 \end{array}\right],~ \left[\begin{array}{lr}
1 & b\\
0 & 0
\end{array}\right]$, for some $b,c \in \mathbb{F}$. In this case the number of choices for distinct $A= n+(n-1)+n+(n-1)=4n-2$.\\
{\bf Case 2.}  Suppose that $b\ne 0$ and $c\ne 0$, which yields $a\notin \{0,1\}$. Then $a(1-a)=bc$ gives $c=a(1-a)b^{-1}$. Hence $A=\left[\begin{array}{lr}
 a & b\\
 a(1-a)b^{-1} & 1-a
 \end{array}\right]$.  Moreover, for each $a\in \mathbb{F}\backslash \{0,1\}$ and non-zero $b\in \mathbb{F}$, we get an idempotent in $R$. Hence the number of idempotents in this case = $(n-2)(n-1)$.
 
 Thus the total number of idempotents in $R$ (including trivial idempotents)= $4n-2+ (n-2)(n-1)+2 = n^2+n+2$. 
\end{proof}
Recently Masaklar et al. \cite{mklk} also determined the idempotents in the matrix rings.

 \noindent {\bf Notation:} Let $E_{ij}$ be the matrix units, where $i,j\in \{1,2\}$, i.e., $E_{ij}=[e_{kl}]_{2\times 2}$, where\\ $e_{kl}=\left\{\begin{array}{cl}
  	1, & \textnormal{ if } k=i,~ l=j\\
  	0, & \textnormal{ otherwise } 
  \end{array}\right.$ Then we can write $A=\left[\begin{array}{lr}
  a & b\\
  c & d
\end{array}\right]=aE_{11}+bE_{12}+cE_{21}+dE_{22}$.
  \begin{rem}\label{r1}
  We get a partition of $Id(R)$ as $\displaystyle{ Id(R)=P_0\dot{\cup}P_1\dot{\cup}\cdots \dot{\cup}P_7}$, where $$ P_0=\left\{\left[\begin{array}{lr}
 0 & 0\\
 0 & 0
 \end{array}\right], \left[\begin{array}{lr}
  1 & 0\\
  0 & 1
  \end{array}\right]\right\},  P_1=\left\{E_{11}\right\},~P_2=\left\{E_{22}\right\}, $$  $$P_3=\left\{\left[\begin{array}{lr}
0 & 0\\
c & 1
\end{array}\right]=cE_{21}+E_{22}~|~\textnormal{ for }\\ c \in \mathbb{F}\backslash\{0\}\right\}, $$ $$P_4= \left\{\left[\begin{array}{lr}
0 & b\\
0 & 1
\end{array}\right]=bE_{12}+E_{22}~| \textnormal{ for } b \in \mathbb{F}\backslash\{0\}\right\},$$ $$P_5=\left\{\left[\begin{array}{lr}
1 & 0\\
c & 0
\end{array}\right]=E_{11}+cE_{21}~|\textnormal{ for } c \in \mathbb{F}\backslash\{0\}\right\},$$ $$P_6= \left\{\left[\begin{array}{lr}
 1 & b\\
 0 & 0
 \end{array}\right]=E_{11}+bE_{12}~ |\textnormal{ for } b \in \mathbb{F}\backslash\{0\}\right\}$$ and 
$$P_7=\left\{\left[\begin{array}{lr}
a & b\\
a(1-a)b^{-1} & 1-a
\end{array}\right]~| \textnormal{ for }  a\in \mathbb{F}\backslash\{0,1\} \textnormal{ and  } b \in \mathbb{F}\backslash\{0\}\right\}.$$ Observe that $\left[\begin{array}{lr}
a & b\\
a(1-a)b^{-1} & 1-a
\end{array}\right]= aE_{11}+bE_{12}+(a(1-a)b^{-1})E_{21}+(1-a)E_{22}$.
Also,  $|P_1|=|P_2|=1$, $|P_3|=|P_4|=|P_5|=|P_6|= n-1$ and $|P_7|=(n-2)(n-1)$.
  \end{rem}
  

\section{Variation of Idempotent graph}
Akbari et al. \cite{sa} introduced  the {\it idempotent graph} $I(R)$ of a ring $R$ as the graph whose vertices are the nontrivial idempotents of $R$, and two distinct vertices $h$ and $k$ are adjacent if and only if $hk=kh=0$.

We consider the following variation of the idempotent graph. 
\begin{defn}
	Let $R$ be a ring. We assign a graph $G_{Id}(R)$ to $R$ whose vertices are the nontrivial idempotents of $R$, and two distinct vertices $h$ and $k$ are adjacent if and only if $hk=0$ or $kh=0$. 
\end{defn}
\noindent {\bf Note  :} Let $R$ be a ring $R$. Then the graphs $I(R)$ and $G_{Id}(R)$ have the same vertex set. In fact,  $I(R)$ is a subgraph of $G_{Id}(R)$. Moreover, if $R$ is an abelian ring, then the two graphs are identical.
\begin{exa}
	Let $R=M_2(\mathbb{Z}_2)$. Then $Id(R)^*=Id(R)\backslash \{\boldsymbol{0},\boldsymbol{1}\}=\{e_1,e_2,\cdots, e_6\}$, where $e_1=\left[\begin{array}{lr}
		1 & 0\\
		0 & 0
	\end{array}\right], e_2=\left[\begin{array}{lr}
		0 & 0\\
		0 & 1
	\end{array}\right], e_3= \left[\begin{array}{lr}
		0 & 0\\
		1 & 1
	\end{array}\right], e_4=\left[\begin{array}{lr}
		0 & 1\\
		0 & 1
	\end{array}\right], e_5=\left[\begin{array}{lr}
		1 & 1\\
		0 & 0
	\end{array}\right]$ and $e_6=\left[\begin{array}{lr}
		1 & 0\\
		1 & 0
	\end{array}\right]$. The graphs $I(R)$ and $G_{Id}(R)$ are as depicted in Figure \ref{f1}.

\begin{figure}
	\begin{center}
	\begin{tikzpicture}
		\draw(0,0)--(0,1); \draw(-1,0)--(-1,1); \draw(1,0)--(1,1);
				
		\draw[fill=black](0,0)circle(.03);
		\draw[fill=black](0,1)circle(.03);
		\draw[fill=black](-1,1)circle(.03);
		\draw[fill=black](-1,0)circle(.03);
		\draw[fill=black](1,0)circle(.03);
		\draw[fill=black](1,1) circle(.03);
		
		\node[below] at(-1,0){$e_4$};
		\node[above] at(-1,1){$e_5$};
		\node[below] at(0,0){$e_1$};
		\node[above] at(0,1){$e_2$};
		\node[below] at(1,0){$e_3$};
		\node[above] at(1,1){$e_6$};
		\node at(0,-1){$I(R)$};
		
			\draw(6,0)--(6,1); \draw(6,0)--(5,0)--(5,1)--(6,1); \draw(6,0)--(7,0)--(7,1)--(6,1);
			
			\draw (7,1) arc
			[start angle=0,	end angle=180,
			x radius=1cm,y radius =.5cm
			];
			
			\draw (7,0) arc
			[start angle=0,	end angle=-180,
			x radius=1cm,y radius =0.5cm
			];
		
		\draw[fill=black](6,0)circle(.03);
		\draw[fill=black](6,1)circle(.03);
		\draw[fill=black](5,1)circle(.03);
		\draw[fill=black](5,0)circle(.03);
		\draw[fill=black](7,0)circle(.03);
		\draw[fill=black](7,1) circle(.03);
		
		\node[left] at(5,0){$e_4$};
		\node[left] at(5,1){$e_5$};
		\node[below] at(6,0){$e_1$};
		\node[above] at(6,1){$e_2$};
		\node[right] at(7,0){$e_3$};
		\node[right] at(7,1){$e_6$};
		\node at(6,-1){$G_{Id}(R)$};
		
	\end{tikzpicture}\caption{Idempotent graph of $M_2(\mathbb{Z}_2)$ and its variation}\label{f1}
\end{center}
\end{figure}
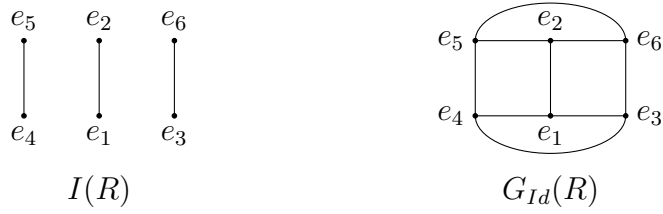
\end{exa}
\noindent{\bf Note:} Since 
$\left[\begin{array}{lr}
	x & y\\
	z & w
\end{array}\right]\left[\begin{array}{lr}
	0 & 0\\
	c & 1
\end{array}\right]=\boldsymbol{0}$ if and only if $\left[\begin{array}{lr}
	0 & c\\
	0 & 1
\end{array}\right]\left[\begin{array}{lr}
	x & y\\
	z & w
\end{array}\right]^t=\boldsymbol{0}$ and\\ $\left[\begin{array}{lr}
	0 & 0\\
	c & 1
\end{array}\right]\left[\begin{array}{lr}
	x & y\\
	z & w
\end{array}\right]=\boldsymbol{0}$ if and only if $\left[\begin{array}{lr}
	x & y\\
	z & w
\end{array}\right]^t\left[\begin{array}{lr}
	0 & c\\
	0 & 1
\end{array}\right]=\boldsymbol{0}$, $deg\left(\left[\begin{array}{lr}
	0 & b\\
	0 & 1
\end{array}\right]\right)=deg\left(\left[\begin{array}{lr}
	0 & 0\\
	c & 1
\end{array}\right]\right)$. Similarly, $deg\left(\left[\begin{array}{lr}
	1 & 0\\
	c & 0
\end{array}\right]\right)=deg\left(\left[\begin{array}{lr}
	1 & b\\
	0 & 0
\end{array}\right]\right)$.

 Now we will determine the paths and distance between every pair of elements of the partitioning sets given Remark \ref{r1}. 
 
 \begin{lem}\label{l1}Let $R=M_2(\mathbb{F})$, where $\mathbb{F}$ is a finite field and $P_0,\cdots, P_7$ as in Remark \ref{r1}. Then the following statements hold in $G_{Id}(R)$.
 	\begin{enumerate}
 		\item $\displaystyle{d(E_{11},f)=\left\{\begin{array}{cl}
 			1, & \textnormal{ if } f\in \cup_{i=2}^4P_i\\
 			2, & \textnormal{ if } f\in \cup_{i=5}^7P_i
 		\end{array}\right.}$ and $deg(E_{11})=2n-1$.
 	\item $\displaystyle{d(E_{22},f)=\left\{\begin{array}{cl}
 		1, & \textnormal{ if } f\in P_1\cup P_5\cup P_6\\
 		2, & \textnormal{ if } f\in P_3\cup P_4\cup P_7
 	\end{array}\right.}$ and  $deg(E_{22})=2n-1$.
 \item Let $E=\left[\begin{array}{lr}
 	a & b\\
 	a(1-a)b^{-1} & 1-a
 \end{array}\right]\in P_7$ and $A$ is a nontrivial idempotent not in $P_7$. Then $E$ and  $A$ are adjacent if and only if\\ $A\in \mathbf{A}=\left\{ \left[\begin{array}{lr}
 0 & 0\\
 (a-1)b^{-1} & 1
\end{array}\right],~\left[\begin{array}{lr}
0 & -a^{-1}b\\
0 & 1
\end{array}\right],~\left[\begin{array}{lr}
1 & (a-1)^{-1}b\\
0& 0
\end{array}\right], \left[\begin{array}{lr}
1 & 0\\
-ab^{-1} & 0
\end{array}\right]\right\}$.
\item Let $A_1=\left[\begin{array}{lr}
	0 & 0\\
	c & 1
\end{array}\right]\in P_3$. 
\begin{enumerate}
	\item[(a)] If
 $\displaystyle{B_1\in \cup_{i=3}^6P_i}$, then\\
 $\displaystyle{d(A_1,B_1)=\left\{\begin{array}{cl}
	1, & \textnormal{ if } B_1\in P_5\cup \{x_1\}\\
	2, & \textnormal{ if } B_1\in \left(P_3\cup P_4\cup P_6\right)\backslash\{x_1,A_1\}
\end{array}\right.}$, where $x_1=\left[\begin{array}{lr}
0 & -c^{-1}\\
0 & 1
\end{array}\right]$.
\item[(b)] If $B_1=\left[\begin{array}{lr}
	a & b\\
	a(1-a)b^{-1} & 1-a
\end{array}\right]\in P_7$, then\\
$d(A_1,B_1)=\left\{\begin{array}{cl}
1, & \textnormal{ if } B_1=x_2\\
2, & \textnormal{ if } B_1\in P_7\backslash\{x_2\}
\end{array}\right.$, where $x_2=\left[\begin{array}{lr}
a & -(1-a)c^{-1}\\
-ac & 1-a
\end{array}\right]$.
\end{enumerate}
\item Let $A_2=\left[\begin{array}{lr}
	0 & c\\
	0 & 1
\end{array}\right]\in P_4$. 
\begin{enumerate}
	\item[(a)] If
	$\displaystyle{B_2\in \cup_{i=3}^6P_i}$, then\\
	$\displaystyle{d(A_2,B_2)=\left\{\begin{array}{cl}
		1, & \textnormal{ if } B_2\in P_6\cup \{x_3\}\\
		2, & \textnormal{ if } B_2\in \left(P_3\cup P_4\cup P_5\right)\backslash\{x_3,A_2\}
	\end{array}\right.}$, where $x_3=\left[\begin{array}{lr}
		0 & 0\\
		-c^{-1} & 1
	\end{array}\right]$.
	\item[(b)] If $B_2=\left[\begin{array}{lr}
		a & b\\
		a(1-a)b^{-1} & 1-a
	\end{array}\right]\in P_7$, then\\
	$d(A_2,B_2)=\left\{\begin{array}{cl}
		1, & \textnormal{ if } B_2=x_4\\
		2, & \textnormal{ if } B_2\in P_7\backslash\{x_4\}
	\end{array}\right.$, where $x_4=\left[\begin{array}{lr}
	a & -ac\\
	(a-1)c^{-1} & 1-a
\end{array}\right]$.
\end{enumerate}
\item Let $A_3=\left[\begin{array}{lr}
	1 & 0\\
	c & 0
\end{array}\right]\in P_5$. 
\begin{enumerate}
	\item[(a)] If
	$\displaystyle{B_3\in \cup_{i=3}^6P_i}$, then\\
	$\displaystyle{d(A_3,B_3)=\left\{\begin{array}{cl}
		1, & \textnormal{ if } B_3\in P_3\cup \{x_5\}\\
		2, & \textnormal{ if } B_3\in \left(P_4\cup P_5\cup P_6\right)\backslash\{x_5,A_3\}
	\end{array}\right.}$, where $x_5=\left[\begin{array}{lr}
		1 & -c^{-1}\\
		0 & 0
	\end{array}\right]$.
	\item[(b)] If $B_3=\left[\begin{array}{lr}
		a & b\\
		a(1-a)b^{-1} & 1-a
	\end{array}\right]\in P_7$, then\\
	$d(A_3,B_3)=\left\{\begin{array}{cl}
		1, & \textnormal{ if } B_3=x_6\\
		2, & \textnormal{ if } B_3\in P_7\backslash\{x_6\}
	\end{array}\right.$, where $x_6=\left[\begin{array}{lr}
	a & -ac^{-1}\\
	(a-1)c & 1-a
\end{array}\right]$.
\end{enumerate}
\item Let $A_4=\left[\begin{array}{lr}
	1 & c\\
	0 & 0
\end{array}\right]\in P_6$. 
\begin{enumerate}
	\item[(a)] If
	$\displaystyle{B_4\in \cup_{i=3}^6P_i}$, then\\
	$\displaystyle{d(A_4,B_4)=\left\{\begin{array}{cl}
		1, & \textnormal{ if } B_4\in P_4\cup \{x_7\}\\
		2, & \textnormal{ if } B_4\in \left(P_3\cup P_5\cup P_6\right)\backslash\{x_7,A_4\}
	\end{array}\right.}$, where $x_7=\left[\begin{array}{lr}
		1 & 0\\
		-c^{-1} & 0
	\end{array}\right]$.
	\item[(b)] If $B_4=\left[\begin{array}{lr}
		a & b\\
		a(1-a)b^{-1} & 1-a
	\end{array}\right]\in P_7$, then\\
	$d(A_3,B_3)=\left\{\begin{array}{cl}
		1, & \textnormal{ if } B_3=x_8\\
		2, & \textnormal{ if } B_3\in P_7\backslash\{x_8\}
	\end{array}\right.$, where $x_8=\left[\begin{array}{lr}
		a & (a-1)c\\
		-ac^{-1} & 1-a
	\end{array}\right]$.
\end{enumerate}

 	\end{enumerate}
 \end{lem}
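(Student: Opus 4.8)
The plan is to reduce the whole statement to direct matrix computation over $\mathbb{F}$, using the explicit parametrizations of $P_1,\dots ,P_7$ from Remark \ref{r1}. By Theorem \ref{t1} every nontrivial idempotent has $tr=1$ and $\det =0$, hence is a rank-one matrix, and for two such idempotents $X,Y$ the adjacency test is simply whether $XY=\boldsymbol 0$ or $YX=\boldsymbol 0$. For a pair claimed to be at distance $1$ I would exhibit one of the two products as $\boldsymbol 0$. For a pair claimed to be at distance $2$ I would first check that both products are nonzero (so $d\ge 2$) and then name an explicit common neighbour, producing a path of length $2$ and hence $d=2$. The vertices $E_{11},E_{22}$, together with the easily verified fact that each vertex of $P_3$ is adjacent to every vertex of $P_5$ (indeed $B_1A_1=\boldsymbol 0$ whenever $A_1\in P_3$, $B_1\in P_5$), will supply most of these common neighbours.

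Before the casework I would record two automorphisms of $G_{Id}(R)$ that halve the labour. Transposition $\tau:X\mapsto X^{t}$ is an automorphism, since $(XY)^{t}=Y^{t}X^{t}$ gives $XY=\boldsymbol 0\iff Y^{t}X^{t}=\boldsymbol 0$, so the disjunction ``$XY=\boldsymbol 0$ or $YX=\boldsymbol 0$'' is preserved; $\tau$ fixes $P_1,P_2,P_7$ and interchanges $P_3\leftrightarrow P_4$ and $P_5\leftrightarrow P_6$. Conjugation $\phi:X\mapsto UXU^{-1}$ by $U=\left[\begin{smallmatrix}0&1\\1&0\end{smallmatrix}\right]$ is a ring automorphism, hence a graph automorphism, and interchanges $P_1\leftrightarrow P_2$, $P_3\leftrightarrow P_6$, $P_4\leftrightarrow P_5$, fixing $P_7$. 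As automorphisms preserve distance, statement (2) follows from (1) via $\phi$ (which sends $E_{11}\mapsto E_{22}$ and the distance-$1$ classes $P_2,P_3,P_4$ to $P_1,P_6,P_5$), while statements (5),(6),(7) follow from (4) via $\tau$, $\phi\tau$ and $\phi$ respectively; one only has to check that the distinguished neighbours $x_i$ are carried to one another under these maps.

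It then remains to prove (3), (4), (1) by computation, in that dependency order. For (3), with $E=\left[\begin{smallmatrix}a&b\\a(1-a)b^{-1}&1-a\end{smallmatrix}\right]$, $a\notin\{0,1\}$, $b\ne 0$, I would solve $EA=\boldsymbol 0$ and $AE=\boldsymbol 0$ for $A$ in each class: because $E$ has rank one its two columns are proportional, so the two scalar conditions arising from $AE=\boldsymbol 0$ (or the two rows from $EA=\boldsymbol 0$) coincide and collapse to a single linear equation in the free parameter; this yields exactly one neighbour of $E$ in each of $P_3,P_4,P_5,P_6$ and none in $P_1\cup P_2$, which is precisely the set $\mathbf A$. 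Part (4) runs the same scheme for $A_1\in P_3$: one reads off $B_1A_1=\boldsymbol 0$ for all $B_1\in P_5$ and $A_1x_1=\boldsymbol 0$ to get the distance-$1$ set, checks that every remaining $B_1$ is non-adjacent, and supplies a common neighbour taken from $P_5$ or from $\{E_{11}\}$; subcase (4b) uses the unique solution of the annihilation equation to single out $x_2$. Finally (1) is immediate from the products $E_{11}X,XE_{11}$: one of them vanishes exactly when $X\in P_2\cup P_3\cup P_4$, whereas for $X\in P_5\cup P_6$ both are nonzero with $E_{22}$ a (directly verified) common neighbour, and for $X\in P_7$ both are nonzero (as $a,b\ne 0$) with the $P_3$-neighbour of $X$ from (3) serving as common neighbour.

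The main obstacle is the bookkeeping around $P_7$. Verifying non-adjacency is routine, but every distance-$2$ claim touching $P_7$ requires actually naming a common neighbour, and here the contrast between ``$A_1\in P_3$ is adjacent to all of $P_5$'' and ``each $E\in P_7$ is adjacent to exactly one vertex of $P_5$'' (by (3)) is exactly what forces a length-$2$ path to exist. The only genuinely delicate point is keeping the parametrized special vertices $x_1,\dots ,x_8$ consistent with their images under $\tau$ and $\phi$; that this transport is exact is again a consequence of the rank-one collapse of the second annihilation equation, which makes each $x_i$ the unique solution in its class.
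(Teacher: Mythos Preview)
Your proposal is correct and takes a genuinely different route from the paper. The paper proves all seven parts by brute-force matrix computation: for each part it multiplies out the relevant products, identifies when they vanish, and exhibits an explicit intermediate vertex for every distance-$2$ claim, treating (1)--(7) as seven independent calculations. You instead isolate two graph automorphisms---transposition $\tau$ and conjugation $\phi$ by the swap matrix---verify how they permute the blocks $P_1,\dots,P_7$, and use them to deduce (2) from (1) and (5),(6),(7) from (4), leaving only (1),(3),(4) to compute directly. The paper does gesture at the transpose trick in the Note preceding the lemma (to equate certain degrees), but never deploys it systematically inside the proof.

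What your approach buys is economy and conceptual clarity: once the action of $\tau$ and $\phi$ on the classes is checked, roughly half the case analysis disappears, and the ``why'' behind the parallel structure of parts (4)--(7) becomes visible. What the paper's approach buys is concreteness and safety: every common neighbour is written down explicitly, so nothing depends on tracking how the distinguished vertices $x_i$ transport under the automorphisms. That transport is exactly the point you flag as delicate, and it is the one place your write-up would need to be expanded---for instance, under $\phi$ a $P_7$-matrix with parameters $(a,b)$ is sent to one with parameters $(1-a,\,a(1-a)b^{-1})$, so the condition $b=-(1-a)c^{-1}$ singling out $x_2$ must be rewritten in the new parameters and matched against $x_8$; this works, but should be stated rather than asserted. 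Your observation that $A_1\in P_3$ is adjacent to \emph{all} of $P_5$ while any $E\in P_7$ is adjacent to exactly \emph{one} vertex of $P_5$ (from (3)) is a cleaner way to produce the length-$2$ path in (4b) than the paper's ad hoc choice of $y_1$.
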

\begin{proof}
(1) Observe that $E_{11}=\left[\begin{array}{lr}
		1 & 0\\
		0 & 0
	\end{array}\right]$ is adjacent to $\left[\begin{array}{lr}
	0 & 0\\
	0 & 1
\end{array}\right],~\left[\begin{array}{lr}
0 & 0\\
c & 1
\end{array}\right]$ and $\left[\begin{array}{lr}
0 & b\\
0 & 1
\end{array}\right]$ only, where $b,c \in \mathbb{F}\backslash\{0\}$. Hence $E_{11}$ is adjacent to every element of $\displaystyle{\cup_{i=2}^4P_i}$ and $E_{11}$ is not adjacent to every element of $\displaystyle{\cup_{i=5}^7P_i}$. Also for any $f\in P_5\cup P_6$, $E_{11}\sim E_{22} \sim f$ is a path in $G_{Id}(R)$. Let $x=\left[\begin{array}{lr}
		a & b\\
		a(1-a)b^{-1} & 1-a
	\end{array}\right]\in P_7$. Then $E_{11}$ and $x$ are non-adjacent and $E_{11}\sim g \sim x$ is a path (since $E_{11}g=gx=\boldsymbol{0}$), where  $g=\left[\begin{array}{lr}
		0 & 0\\
		(a-1)b^{-1} & 1
	\end{array}\right]\in P_3$. Hence $d(E_{11},x)=2$, $\forall x\in P_7$. Therefore $\displaystyle{d(E_{11},f)=\left\{\begin{array}{cl}
		1, & \textnormal{ if } f\in \cup_{i=2}^4P_i\\
		2, & \textnormal{ if } f\in \cup_{i=5}^7P_i
	\end{array}\right.}$.\\
(2) Observe that $E_{22}=\left[\begin{array}{lr}
		0 & 0\\
		0 & 1
	\end{array}\right]$ is adjacent to is adjacent to $\left[\begin{array}{lr}
	1 & 0\\
	0 & 0
\end{array}\right],~\left[\begin{array}{lr}
1 & b\\
0 & 0
\end{array}\right]$ and $\left[\begin{array}{lr}
1 & 0\\
c & 0
\end{array}\right]$ only, where $b,c \in \mathbb{F}\backslash\{0\}$, i.e. $E_{22}$ is adjacent to every element of $P_1\cup P_5\cup P_6$ and $E_{22}$ is not adjacent to any element of $P_3\cup P_4\cup P_7$. Also, for any $f\in P_3\cup P_4$, $E_{22}\sim E_{11} \sim f$ is a path in $G_{Id}(R)$(since $E_{11}E_{22}=\boldsymbol{0}=fE_{11}$). Let $y=\left[\begin{array}{lr}
		a & b\\
		a(1-a)b^{-1} & 1-a
	\end{array}\right]\in P_7$. Then $E_{22}$ and $y$ are non-adjacent and $E_{22}\sim h \sim y$ is a path (since $E_{22}h=hy=\boldsymbol{0}$), where  $h=\left[\begin{array}{lr}
		1 & (a-1)^{-1}b\\
		0 & 0
	\end{array}\right]\in P_6$. Hence $d(E_{22},y)=2$, $\forall y\in P_7$. Therefore $d(E_{22},f)=\left\{\begin{array}{cl}
		1, & \textnormal{ if } f\in P_1\cup P_5\cup P_6\\
		2, & \textnormal{ if } f\in P_3\cup P_4\cup P_7
	\end{array}\right..$\\
(3) Observe that $E=\left[\begin{array}{lr}
	a & b\\
	a(1-a)b^{-1} & 1-a
\end{array}\right]$ is not adjacent to $E_{11}$ and $E_{22}$. Also, for nonzero $c\in \mathbb{F}$, we have $\left[\begin{array}{lr}
	1 & 0\\
	c & 0
\end{array}\right]E\ne \boldsymbol{0}$, $E\left[\begin{array}{lr}
1 & c\\
0 & 0
\end{array}\right]\ne \boldsymbol{0}$, $\left[\begin{array}{lr}
0 & c\\
0 & 1
\end{array}\right]E\ne \boldsymbol{0}$ and $E\left[\begin{array}{lr}
	0 & 0\\
	c & 1
\end{array}\right]\ne \boldsymbol{0}$. Let $\left[\begin{array}{lr}
	0 & 0\\
	c & 1
\end{array}\right]E= \boldsymbol{0}$. Which gives $\left[\begin{array}{lr}
	0 & 0\\
	ac+a(1-a)b^{-1} & bc+1-a
\end{array}\right]=\boldsymbol{0}$, which yields $ ac+a(1-a)b^{-1}=0$ and $bc+1-a=0$. Consequently, $c=(a-1)b^{-1}$. Thus  $\left[\begin{array}{lr}
	0 & 0\\
	c & 1
\end{array}\right]E= \boldsymbol{0}$ if and only if $c=(a-1)b^{-1}$. Next,  let $E\left[\begin{array}{lr}
	1 & 0\\
	c & 0
\end{array}\right]= \boldsymbol{0}$. Which gives $\left[\begin{array}{lr}
	a+bc & 0\\
	a(1-a)b^{-1}+c(1-a) & 0
\end{array}\right]=\boldsymbol{0}$ which yields $ a(1-a)b^{-1}+c(1-a)=0$ and $a+bc=0$. Consequently, $c=-ab^{-1}$. Thus  $E\left[\begin{array}{lr}
	1 & 0\\
	c & 0
\end{array}\right]= \boldsymbol{0}$ if and only if $c=-ab^{-1}$.  Similarly $E\left[\begin{array}{lr}
1 & c\\
0 & 0
\end{array}\right]E= \boldsymbol{0}$ gives $c=(a-1)^{-1}b$; and $E\left[\begin{array}{lr}
0 & c\\
0 & 1
\end{array}\right]= \boldsymbol{0}$ gives $c=-a^{-1}b$. 
 Therefore, if $A$ is a nontrivial idempotent not in $P_7$, then $E$ is adjacent to $A$ if and only if  $A\in \mathbf{A}= \left\{ \left[\begin{array}{lr}
 	0 & 0\\
 	(a-1)b^{-1} & 1
 \end{array}\right],~\left[\begin{array}{lr}
 	0 & -a^{-1}b\\
 	0 & 1
 \end{array}\right],~\left[\begin{array}{lr}
 	1 & (a-1)^{-1}b\\
 	0& 0
 \end{array}\right], \left[\begin{array}{lr}
 	1 & 0\\
 	-ab^{-1} & 0
 \end{array}\right]\right\}$.\\
(4) (a) Let $A_1=\left[\begin{array}{lr}
	0 & 0\\
	c & 1
\end{array}\right]$,
	$\displaystyle{B_1\in \cup_{i=3}^6P_i}$ and  $x_1=\left[\begin{array}{lr}
		0 & -c^{-1}\\
		0 & 1
	\end{array}\right]$. Since $A_1x_1=\boldsymbol{0}=\left[\begin{array}{lr}
	1 & 0\\
	b & 0
\end{array}\right]A_1$, for every nonzero $b\in \mathbb{F}$, we have $A_1$ adjacent to $x_1$ and to every element of $P_5$. On the other hand, $A_1$ is not adjacent to any element of $P_3\cup P_4\cup P_6$ (since 
	$A_1B_1\ne \boldsymbol{0}$ and $B_1A_1\ne \boldsymbol{0}$, for every $B_1\in (P_3\cup P_4\cup P_6)\backslash\{x_1\}$). If $B_1\in (P_3\cup P_4)\backslash\{x_1\}$, then $E_{11}$ is a common neighbour of $A_1$ and $B_1$. If $B_1=\left[\begin{array}{lr}
		1 & b\\
		0 & 0
	\end{array}\right]\in P_6$, then $A_1\sim y\sim B_1$ is a path(since $yA_1=B_1y=\boldsymbol{0}$), where $y=\left[\begin{array}{lr}
	1 & 0\\
	-b^{-1} & 0
\end{array}\right]\in P_5$. Hence the result.\\
(b)  Let $B_1=\left[\begin{array}{lr}
		a & b\\
		a(1-a)b^{-1} & 1-a
	\end{array}\right]\in P_7$. Then $B_1A_1\ne \boldsymbol{0}$, since $b$ is nonzero. Let $A_1B_1=\boldsymbol{0}$, i.e.,  $\left[\begin{array}{lr}
		0 & 0\\
		c & 1
	\end{array}\right]\left[\begin{array}{lr}
		a & b\\
		a(1-a)b^{-1} & 1-a
	\end{array}\right]= \boldsymbol{0}$. Which gives\\ $\left[\begin{array}{lr}
	0 & 0\\
	ac+a(1-a)b^{-1} & bc+1-a
\end{array}\right]= \boldsymbol{0}$ if and only if $b=(a-1)c^{-1}$, i.e. $B_1=x_2$. Suppose that $B_1\ne x_2$. Then $A_1\sim y_1\sim B_1$ is a path (since $y_1A_1=B_1y_1=\boldsymbol{0}$), where  $y_1=\left[\begin{array}{lr}
1 & 0\\
-ab^{-1} & 0
\end{array}\right]$. Hence the result.\\
(5) (a) Let $A_2=\left[\begin{array}{lr}
	0 & c\\
	0 & 1
\end{array}\right]$,
$\displaystyle{B_2\in \cup_{i=3}^6P_i}$ and  $x_3=\left[\begin{array}{lr}
	0 & 0\\
	-c^{-1} & 1
\end{array}\right]$. Since $x_3A_2=\boldsymbol{0}=A_2\left[\begin{array}{lr}
1 & b\\
	0 & 0
\end{array}\right]$, for every nonzero $b\in \mathbb{F}$, we have $A_2$ adjacent to $x_3$ and to every element of $P_6$. On the other hand, $A_2$ is not adjacent to any element of $(P_3\cup P_4\cup P_5)\backslash\{x_3\}$ (since 
$A_2B_2\ne \boldsymbol{0}$ and $B_2A_2\ne \boldsymbol{0}$, for every $B_2\in (P_3\cup P_4\cup P_5)\backslash\{x_3\}$). If $B_2\in (P_3\cup P_4)\backslash\{x_3\}$, then $E_{11}$ is a common neighbour of $A_2$ and $B_2$. If $B_2=\left[\begin{array}{lr}
	1 & 0\\
	b & 0
\end{array}\right]\in P_5$, then $A_2\sim y'\sim B_2$ is a path(since $A_2y'=y'B_2=\boldsymbol{0}$), where $y'=\left[\begin{array}{lr}
	1 & -b^{-1}\\
	 0 & 0
\end{array}\right]\in P_6$. Hence the result.\\
(b)  Let $B_2=\left[\begin{array}{lr}
	a & b\\
	a(1-a)b^{-1} & 1-a
\end{array}\right]\in P_7$. Then $A_2B_2\ne \boldsymbol{0}$, since $1-a$ is nonzero. Let $B_2A_2=\boldsymbol{0}$, i.e.,  $\left[\begin{array}{lr}
a & b\\
a(1-a)b^{-1} & 1-a
\end{array}\right]\left[\begin{array}{lr}
	0 & c\\
	0 & 1
\end{array}\right]= \boldsymbol{0}$. Which gives\\ $\left[\begin{array}{lr}
	0 & ac+b\\
	0 & ac(1-a)b^{-1}+(1-a)
\end{array}\right]= \boldsymbol{0}$ if and only if $b=-ac$, i.e. $B_2=x_4$. Suppose that $B_2\ne x_4$. Then $A_2\sim y_2\sim B_2$ is a path (since $A_2y_2=y_2B_2=\boldsymbol{0}$), where  $y_2=\left[\begin{array}{lr}
	1 & (a-1)^{-1}\\
	0 & 0
\end{array}\right]$. Hence the result.\\
(6) (a) Let $A_3=\left[\begin{array}{lr}
	1 & 0\\
	c & 0
\end{array}\right]$,
$\displaystyle{B_3\in \cup_{i=3}^6P_i}$ and  $x_5=\left[\begin{array}{lr}
	1 & -c^{-1}\\
	0 & 0
\end{array}\right]$. Since $x_5A_3=\boldsymbol{0}=A_3\left[\begin{array}{lr}
	0 & 0\\
	b & 1
\end{array}\right]$, for every nonzero $b\in \mathbb{F}$, we have $A_3$ adjacent to $x_5$ and to every element of $P_3$. On the other hand, $A_3$ is not adjacent to any element of $(P_4\cup P_5\cup P_6)\backslash\{x_5\}$ (since 
$A_3B_3\ne \boldsymbol{0}$ and $B_3A_3\ne \boldsymbol{0}$, for every $B_3\in (P_4\cup P_5\cup P_6)\backslash\{x_5\}$). If $B_3\in (P_5\cup P_6)\backslash\{x_5\}$, then $E_{22}$ is a common neighbour of $A_3$ and $B_3$. If $B_3=\left[\begin{array}{lr}
	0 & b\\
	0 & 1
\end{array}\right]\in P_4$, then $A_3\sim y''\sim B_3$ is a path (since $A_3y''=\boldsymbol{0}=y''B_3$), where $y''=\left[\begin{array}{lr}
0 & 0\\
-b^{-1} & 1
\end{array}\right]\in P_3$.  Hence the result.\\
(b)  Let $B_3=\left[\begin{array}{lr}
	a & b\\
	a(1-a)b^{-1} & 1-a
\end{array}\right]\in P_7$. Then $A_3B_3\ne \boldsymbol{0}$, since $a$ is nonzero. Let $B_3A_3=\boldsymbol{0}$, i.e.,  $\left[\begin{array}{lr}
	a & b\\
	a(1-a)b^{-1} & 1-a
\end{array}\right]\left[\begin{array}{lr}
	1 & 0\\
	c & 0
\end{array}\right]= \boldsymbol{0}$. Which gives\\ $\left[\begin{array}{lr}
	a+bc & 0\\
	a(1-a)b^{-1}+c(1-a) & 0
\end{array}\right]= \boldsymbol{0}$ if and only if $b=-ac^{-1}$, i.e. $B_3=x_6$. Suppose that $B_3\ne x_6$. Then $A_3\sim y_3\sim B_3$ is a path (since $A_3y_3=y_3B_3=\boldsymbol{0}$), where  $y_3=\left[\begin{array}{lr}
0 & 0\\
(a-1)b^{-1} & 1
\end{array}\right]\in P_3$. Hence the result.\\
(7) (a) Let $A_4=\left[\begin{array}{lr}
1 & c\\
0 & 0
\end{array}\right]$,
$\displaystyle{B_4\in \cup_{i=3}^6P_i}$ and  $x_7=\left[\begin{array}{lr}
1 & 0\\
-c^{-1} & 0
\end{array}\right]$. Since $A_4x_7=\boldsymbol{0}=\left[\begin{array}{lr}
0 & b\\
0 & 1
\end{array}\right]A_4$, for every nonzero $b\in \mathbb{F}$, we have $A_4$ adjacent to $x_7$ and to every element of $P_4$. On the other hand, $A_4$ is not adjacent to any element of $(P_3\cup P_5\cup P_6)\backslash\{x_7\}$ (since 
$A_4B_4\ne \boldsymbol{0}$ and $B_4A_4\ne \boldsymbol{0}$, for every $B_4\in (P_3\cup P_5\cup P_6)\backslash\{x_7\}$). If $B_4\in (P_5\cup P_6)\backslash\{x_7\}$, then $E_{22}$ is a common neighbour of $A_4$ and $B_4$. If $B_4=\left[\begin{array}{lr}
0 & 0\\
b & 1
\end{array}\right]\in P_3$, then $A_4\sim \tilde{y}\sim B_4$ is a path (since $\tilde{y}A_4=\boldsymbol{0}=B_4\tilde{y}$), where $\tilde{y}=\left[\begin{array}{lr}
0 & -b^{-1}\\
 0 & 1
\end{array}\right]\in P_6$.  Hence the result.\\
(b)  Let $B_4=\left[\begin{array}{lr}
a & b\\
a(1-a)b^{-1} & 1-a
\end{array}\right]\in P_7$. Then $B_4A_4\ne \boldsymbol{0}$, since $a$ is nonzero. Let $A_4B_4=\boldsymbol{0}$, i.e.,  $\left[\begin{array}{lr}
1 & c\\
0 & 0
\end{array}\right]\left[\begin{array}{lr}
a & b\\
a(1-a)b^{-1} & 1-a
\end{array}\right]= \boldsymbol{0}$. Which gives\\ $\left[\begin{array}{lr}
a+ac(1-a)^{-1} & b+c(1-a)\\
0 & 0
\end{array}\right]= \boldsymbol{0}$ if and only if $b=-(1-a)c$, i.e. $B_4=x_8$. Suppose that $B_4\ne x_8$. Then $A_4\sim y_4\sim B_4$ is a path (since $y_4A_4=B_4y_4=\boldsymbol{0}$), where  $y_4=\left[\begin{array}{lr}
0 & -a^{-1}b\\
0 & 1
\end{array}\right]\in P_3$. Hence the result.
\end{proof}
\begin{coro}\label{c1}
	Let $R=M_2(\mathbb{F})$, where $\mathbb{F}$ is a finite field and $P_0,\cdots, P_7$ as in Remark \ref{r1}. Then $\deg(A)=2n-1$, for each $\displaystyle{A\in \cup_{i=1}^6P_i}$.
\end{coro}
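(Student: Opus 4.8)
The plan is to count, for each vertex $A \in \cup_{i=1}^6 P_i$, its neighbors by splitting the ambient partition $Id(R)=P_0 \dot{\cup} P_1 \dot{\cup} \cdots \dot{\cup} P_7$ into the blocks $P_1\cup P_2$, $\cup_{i=3}^6 P_i$, and $P_7$; the block $P_0$ consists of the trivial idempotents $\boldsymbol{0},\boldsymbol{1}$, which are not vertices and contribute nothing. Thus the entire verification reduces to tallying the contributions of the three remaining blocks and checking that they sum to $2n-1$ in every case.

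For the singletons, the degrees $\deg(E_{11})=\deg(E_{22})=2n-1$ are exactly what parts (1) and (2) of Lemma~\ref{l1} assert, so those cases are already settled. For a generic $A$ lying in one of $P_3,P_4,P_5,P_6$, the three blocks contribute as follows. First, within $\cup_{i=3}^6 P_i$ the appropriate part among (4a), (5a), (6a), (7a) of Lemma~\ref{l1} shows that $A$ is adjacent to every element of one of the four sets---that is, $n-1$ vertices---together with exactly one further vertex $x_j$, so this block contributes $n$ neighbors. Second, by the matching part among (4b), (5b), (6b), (7b), for each fixed $a\in\mathbb{F}\backslash\{0,1\}$ the one-sided orthogonality equation determines the remaining entry uniquely, producing exactly one neighbor in $P_7$; letting $a$ range over its $n-2$ admissible values yields $n-2$ distinct neighbors in $P_7$. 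Third, a direct matrix computation (equivalently, parts (1) and (2) of the lemma) shows that $A$ is adjacent to exactly one of $E_{11}$ and $E_{22}$, so $P_1\cup P_2$ contributes exactly $1$. Summing gives $\deg(A)=n+(n-2)+1=2n-1$ in each of the four cases.

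The step demanding the most care is the $P_7$ count. The lemma phrases the adjacency condition as ``$B=x_2$'' (and likewise $x_4,x_6,x_8$), which superficially suggests a single neighbor; the essential observation is that $x_2$ secretly depends on the parameter $a$ of $B\in P_7$, so as $a$ varies over $\mathbb{F}\backslash\{0,1\}$ one obtains a whole family of $n-2$ orthogonal partners, each with distinct $(1,1)$-entry and hence pairwise distinct. Once this bookkeeping is pinned down, the uniform arithmetic $1+n+(n-2)=2n-1$ closes all six cases, establishing that $G_{Id}(R)$ is $(2n-1)$-regular on $\cup_{i=1}^6 P_i$.
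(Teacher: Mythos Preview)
Your proof is correct and follows essentially the same approach as the paper's own proof: both invoke Lemma~\ref{l1} to count, block by block, the neighbors of a vertex $A\in\cup_{i=1}^6 P_i$ and arrive at the same total $2n-1$. The only cosmetic difference is bookkeeping---the paper groups $E_{11}$ with $x_1$ and $P_5$ to get $(n-1)+2+(n-2)$, while you separate $P_1\cup P_2$ from $\cup_{i=3}^6 P_i$ to get $1+n+(n-2)$---and your explicit remark that $x_2$ varies with the parameter $a$ (yielding $n-2$ distinct neighbors in $P_7$) makes a point the paper leaves implicit.
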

\begin{proof}Clearly, for any vertex $x$ in a graph $G$, $\deg(x)=\left|\{y\in V(G)~|~d(x,y)=1\}\right|$. By Lemma \ref{l1}(1), we get  $\deg(E_{11})=|P_2|+|P_3|+|P_4|=1+n-1+n-1=2n-1$. Similarly, by Lemma \ref{l1}(2), $\deg(E_{22})=|P_1|+|P_5|+|P_6|=1+n-1+n-1=2n-1$.
	
Suppose that $A\in P_3$. Then  $A_1=\left[\begin{array}{lr}
	0 & 0\\
	c & 1
\end{array}\right]$, for some nonzero $c\in \mathbb{F}$. Hence $A$ is adjacent to every element of $P_5\cup \{E_{11}, x_1\}$, where  $x_1=\left[\begin{array}{lr}
	0 & -c^{-1}\\
	0 & 1
\end{array}\right]$. Also, by Lemma \ref{l1}(4), $A$ is adjacent to $\left[\begin{array}{lr}
a & -(1-a)c^{-1}\\
-ac & 1-a
\end{array}\right]$, for every $a\in \mathbb{F}\backslash\{0,1\}$, i.e. $A$ is adjacent to $n-2$ elements from $P_7$. Hence $\deg (A)= |P_5\cup \{E_{11}, x_1\}|+ n-2=|P_5|+2+ n-2=n-1+n=2n-1$ (since $|P_5|=n-1$). Similarly, $\deg (A)=2n-1$, when $A\in P_i$, for $i=4,5,6$. Hence the result follows.
\end{proof}
From Lemma \ref{l1}(3), it is clear that an element  $E$ of $P_7$ is adjacent to only 4 elements from $\displaystyle{\cup_{i=1}^6P_i}$. The following result gives the elements of $P_7$ that are adjacent to $E$.
\begin{lem}
	\label{l2} Let $E=\left[\begin{array}{lr}
		a & b\\
		a(1-a)b^{-1} & 1-a
	\end{array}\right],~E_i=\left[\begin{array}{lr}
		a_i & b_i\\
		a_i(1-a_i)b_i^{-1} & 1-a_i
	\end{array}\right]\in P_7$, for $i=1,2$. Then \begin{enumerate}
	\item $EE_1=\boldsymbol{0}$ if and only if $b_1=-(1-a_1)a^{-1}b$.
	\item $E_2E=\boldsymbol{0}$ if and only if $b_2=-a_2(1-a)^{-1}b$.
\end{enumerate} Moreover, $\deg(E)=2n-1$.
\end{lem}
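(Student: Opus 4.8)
The plan is to establish the two product conditions by direct matrix multiplication and then to assemble the degree count from Lemma~\ref{l1}(3) together with a careful inclusion--exclusion over $P_7$.

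For item (1), I would multiply out $EE_1$ entry by entry. Its $(1,2)$ entry is $ab_1+b(1-a_1)$; since $a\ne 0$, setting this entry to $\boldsymbol{0}$ gives exactly $b_1=-(1-a_1)a^{-1}b$. The substance of the claim is that this single relation already forces the whole product to vanish, so the remaining step is to substitute $b_1=-(1-a_1)a^{-1}b$ (hence $b_1^{-1}=-a(1-a_1)^{-1}b^{-1}$, which is legitimate because $a_1\ne 1$ and $b\ne 0$) back into the other three entries and verify each collapses to $0$. The converse is immediate: if $EE_1=\boldsymbol{0}$ then in particular its $(1,2)$ entry vanishes, which is the stated relation. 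Item (2) is handled in the same way, now reading off the $(1,1)$ entry of $E_2E$, namely $a\bigl[a_2+b_2(1-a)b^{-1}\bigr]$; since $a\ne 0$ this vanishes iff $b_2=-a_2(1-a)^{-1}b$, and substituting that relation back shows the other three entries also vanish.

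For the degree, I would combine the two counts. By Lemma~\ref{l1}(3), $E$ has exactly $4$ neighbours in $\cup_{i=1}^{6}P_i$, so it remains only to count neighbours inside $P_7$. By item (1), the elements $E_1\in P_7$ with $EE_1=\boldsymbol{0}$ are in bijection with the choices $a_1\in\mathbb{F}\backslash\{0,1\}$ (each determines $b_1$ uniquely, with $b_1\ne 0$), giving $n-2$ such elements; similarly item (2) gives $n-2$ elements $E_2$ with $E_2E=\boldsymbol{0}$. The key point is the overlap: an element $E'$ lying in both families satisfies $EE'=E'E=\boldsymbol{0}$, and equating the two expressions for its $b$-coordinate forces its top-left parameter to equal $1-a$. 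This singles out the unique complementary idempotent $E'=\boldsymbol{1}-E$, for which $EE'=E-E^2=\boldsymbol{0}=E'E$ and whose parameter $1-a$ indeed lies in $\mathbb{F}\backslash\{0,1\}$, so that $\boldsymbol{1}-E\in P_7$. Hence the two families meet in exactly one vertex, and by inclusion--exclusion $E$ has $(n-2)+(n-2)-1=2n-5$ neighbours in $P_7$. Adding the $4$ outside neighbours gives $\deg(E)=(2n-5)+4=2n-1$.

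The algebraic verifications (that each product relation forces all four matrix entries to vanish) are mechanical. The step that genuinely requires care is the overlap computation: one must check that each relation yields a legitimate vertex of $P_7$ distinct from $E$, and, above all, that the two $(n-2)$-element families intersect in exactly one point, namely $\boldsymbol{1}-E$. This is the main obstacle, since the naive sum $2(n-2)$ would overcount by one and spoil the clean value $2n-1$.
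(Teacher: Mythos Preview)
Your proposal is correct and follows essentially the same route as the paper: direct matrix multiplication for (1) and (2), followed by the same inclusion--exclusion over $P_7$ with the identification of the single overlap as $\boldsymbol{1}-E$. The only cosmetic difference is that the paper handles (2) by transposing $E_2E=\boldsymbol{0}$ and invoking (1), whereas you compute the $(1,1)$ entry of $E_2E$ directly; both arrive at the same relation and the degree computation is identical.
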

\begin{proof}(1) Suppose that $EE_1=\boldsymbol{0}$, i.e., $\left[\begin{array}{lr}
		a & b\\
		a(1-a)b^{-1} & 1-a
	\end{array}\right]\left[\begin{array}{lr}
		a_1 & b_1\\
		a_1(1-a_1)b_1^{-1} & 1-a_1
	\end{array}\right]=\boldsymbol{0}$, which gives \\ $\left[\begin{array}{lr}
		aa_1+a_1(1-a_1)bb_1^{-1} & ab_1+b(1-a_1)\\
		a(1-a)a_1b^{-1}+(1-a)a_1(1-a_1)b_1^{-1} & a(1-a)b^{-1}b_1+(1-a)(1-a_1)
	\end{array}\right]=\boldsymbol{0}$, which yields

 $\begin{array}{rc}aa_1+a_1(1-a_1)bb_1^{-1}&=0\\
		ab_1+b(1-a_1)&=0\\ a(1-a)a_1b^{-1}+(1-a)a_1(1-a_1)b_1^{-1}&=0\\ a(1-a)b^{-1}b_1+(1-a)(1-a_1)&=0\end{array}.$\\
	Consequently, $
		b_1=-(1-a_1)a^{-1}b$. 
	Thus, in this case,  $E$ is adjacent to\\ $\left[\begin{array}{lr}
		a_1 & -(1-a_1)a^{-1}b\\
		-a_1ab^{-1} & 1-a_1
	\end{array}\right]$, for each $a_1\in \mathbb{F}\backslash\{0,1\}$, i.e. $E$ has $n-2$ distinct neighbour in this case.\\
(2) Suppose that $E_2E=\boldsymbol{0}$, i.e., $\left[\begin{array}{lr}
		a_2 & b_2\\
		a_2(1-a_2)b_2^{-1} & 1-a_2
	\end{array}\right]\left[\begin{array}{lr}
		a & b\\
		a(1-a)b^{-1} & 1-a
	\end{array}\right]=\boldsymbol{0}$, which gives (by taking transpose) $\left[\begin{array}{lr}
		a & a(1-a)b^{-1}\\
		b & 1-a
	\end{array}\right]\left[\begin{array}{lr}
		a_2 &  a_2(1-a_2)b_2^{-1}\\
		b_2 & 1-a_2
	\end{array}\right]=\boldsymbol{0}$. By (1) above, we have  $\left[\begin{array}{lr}
		a & a(1-a)b^{-1}\\
		b & 1-a
	\end{array}\right]$ is adjacent to $\left[\begin{array}{lr}
		a_2 &  a_2(1-a_2)b_2^{-1}\\
		b_2 & 1-a_2
	\end{array}\right]$ if and only if  $a_2(1-a_2)b_2^{-1}=-(1-a_2)a^{-1}[a(1-a)b^{-1}],\textnormal{ i.e. ~~}  b_2=-a_2(1-a)^{-1}b$.
	Hence $E_2=\left[\begin{array}{lr}
		a_2 & -a_2(1-a)^{-1}b\\
		-(1-a_2)(1-a)b^{-1} & 1-a_2
	\end{array}\right]$ is adjacent to $E$, for each $a_2\in \mathbb{F}\backslash \{0,1\}$, i.e. $E$ has $n-2$ distinct neighbour in this case.
	
	Next, we will determine the degree of $E$. Observe that, if $EE_2=\boldsymbol{0}$, then $E_2$ was also counted in case (1) above. Hence to get distinct $E_2$, we should omit the case when $EE_2=\boldsymbol{0}$ also, i.e. the case when, $EE_2=\boldsymbol{0}=E_2E$. Then $EE_2=\boldsymbol{0}$ gives (by (1) above) $b_2=-(1-a_2)a^{-1}b$ and $E_2E=\boldsymbol{0}$ gives (by (2) above) $b_2=-a_2(1-a)^{-1}b$. Hence $-(1-a_2)a^{-1}b=-a_2(1-a)^{-1}b$, i.e.,$(1-a_2)a^{-1}=a_2(1-a)^{-1}$, which gives $(1-a_2)(1-a)=a_2a$, which yields $a_2=1-a$. Then $b_2=-a_2(1-a)^{-1}b=-(1-a)(1-a)^{-1}b$ giving $b_2=-b$. Thus $EE_2=\boldsymbol{0}$ and $E_2E=\boldsymbol{0}$ if and only if $E_2=\boldsymbol{1}-E$.  
	
Let $\mathbf{A}$ be as given Lemma \ref{l1}(3). Then $\deg(E)= |\mathbf{A}|+n-2+n-2-1=4+2n-5=2n-1$.
\end{proof}

\begin{prop}\label{p1}Let $R=M_2(\mathbb{F})$, where $\mathbb{F}$ is a finite field. Then for any nonzero idempotent $E\in R$, $AE=EA=\boldsymbol{0}$ if and only if $A=\boldsymbol{1}-E$.
\end{prop}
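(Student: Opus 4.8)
The forward-looking plan is to treat the two implications separately, with essentially all the content living in the ``only if'' direction; throughout I take $A$ to be a nonzero idempotent, which is the regime in which the statement is used (for a genuinely arbitrary matrix $A$ the claim fails---e.g. every $A=\mathrm{diag}(0,d)$ is annihilated on both sides by $E=E_{11}$, so some idempotency hypothesis on $A$ is needed). The ``if'' direction is then immediate and uses only $E^2=E$: if $A=\boldsymbol{1}-E$ then $AE=(\boldsymbol{1}-E)E=E-E^2=\boldsymbol{0}$ and likewise $EA=E-E^2=\boldsymbol{0}$.

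For the converse I would argue structurally through ranks, images and kernels rather than by brute-force entrywise multiplication. First dispose of the degenerate case $E=\boldsymbol{1}$: here $AE=A$, so $AE=\boldsymbol{0}$ forces $A=\boldsymbol{0}=\boldsymbol{1}-E$ and the claim holds. Otherwise $E$ is a nontrivial idempotent, so by Theorem \ref{t1} (the identity $(\det E)^2=\det E$ together with $E\neq\boldsymbol{1}$) we have $\det E=0$ and $\mathrm{tr}(E)=1$; hence $E$ has rank $1$, and being idempotent it is the projection onto the line $\mathrm{im}(E)$ along the line $\ker(E)$, with $\mathbb{F}^2=\mathrm{im}(E)\oplus\ker(E)$. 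The same reasoning applies to $A$: it cannot be $\boldsymbol{1}$ (else $EA=E\neq\boldsymbol{0}$), so $A$ is a rank-$1$ idempotent with $\mathbb{F}^2=\mathrm{im}(A)\oplus\ker(A)$.

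Next I would read the two orthogonality relations geometrically. The equation $EA=\boldsymbol{0}$ says $\mathrm{im}(A)\subseteq\ker(E)$, and since both are one-dimensional this forces $\mathrm{im}(A)=\ker(E)$; dually $AE=\boldsymbol{0}$ gives $\mathrm{im}(E)\subseteq\ker(A)$, hence $\ker(A)=\mathrm{im}(E)$. On the other hand $\boldsymbol{1}-E$ is itself a rank-$1$ idempotent with $\mathrm{im}(\boldsymbol{1}-E)=\ker(E)$ and $\ker(\boldsymbol{1}-E)=\mathrm{im}(E)$. Thus $A$ and $\boldsymbol{1}-E$ are idempotents with identical image and identical kernel, and since a projection on $\mathbb{F}^2$ is determined by its complementary decomposition $\mathrm{im}\oplus\ker$ (it is the identity on the first summand and zero on the second), I conclude $A=\boldsymbol{1}-E$.

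The single delicate point, and the step I expect to need the most care, is the uniqueness of a projection given its image and kernel; this rests on the splitting $\mathbb{F}^2=\mathrm{im}\oplus\ker$, which is exactly what idempotency guarantees over a field. A purely computational alternative exists and would mirror Lemma \ref{l2}: parametrise both $E$ and $A$ by the families $P_1,\dots,P_7$ of Remark \ref{r1}, impose $AE=EA=\boldsymbol{0}$ entrywise, and solve, recovering $A=\boldsymbol{1}-E$ in each configuration (as already seen for $E,E_2\in P_7$ in Lemma \ref{l2}); I avoid that route because it multiplies into many case distinctions that the rank--kernel argument sidesteps entirely.
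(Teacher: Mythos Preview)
Your proof is correct and takes a genuinely different route from the paper's. The paper argues by exhaustive case analysis: it places $E$ in one of the cells $P_1,\dots,P_7$ of Remark \ref{r1} and, for each cell, runs through the possible locations of $A$, invoking the adjacency computations of Lemma \ref{l1} (and the final paragraph of Lemma \ref{l2} for the $P_7$--$P_7$ case) to check that among the neighbours of $E$ only $\boldsymbol{1}-E$ is two-sidedly orthogonal. This is precisely the ``purely computational alternative'' you describe and set aside.

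Your image/kernel argument is cleaner and coordinate-free: once $E$ and $A$ are seen as rank-one projections, the two equations $EA=\boldsymbol{0}$ and $AE=\boldsymbol{0}$ pin down $\mathrm{im}(A)=\ker(E)$ and $\ker(A)=\mathrm{im}(E)$, and the uniqueness of a projection with prescribed complementary image and kernel finishes it. What the paper's approach buys is consistency with the surrounding machinery (everything is already phrased via the explicit partition $P_1,\dots,P_7$) and the observation that the hard $P_7$ case was effectively handled in Lemma \ref{l2}; what your approach buys is brevity, no case splits, and an argument that works verbatim over any field without ever writing down a matrix entry. One minor wording issue: in the $E=\boldsymbol{1}$ case you conclude $A=\boldsymbol{0}$, which conflicts with your standing hypothesis that $A$ is a nonzero idempotent---the cleanest phrasing is that the biconditional holds vacuously there, since no nonzero $A$ satisfies $AE=\boldsymbol{0}$ and $\boldsymbol{1}-E=\boldsymbol{0}$ is likewise excluded.
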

\begin{proof}
	Let $E$ be a nonzero idempotent in $R$ and $P_1,P_2,\cdots, P_7$ be as in Remark \ref{r1}. Clearly $\boldsymbol{1}-E$ is a nonzero idempotent in $R$ such that $E(\boldsymbol{1}-E)=(\boldsymbol{1}-E)E=\boldsymbol{0}$. Since $V(G_{Id}(R))=\dot{\cup}_{i=1}^7P_i$, we have the following cases.\\
	{\bf Case 1)} If $E=E_{11}$, then $\boldsymbol{1}-E=E_{22}$. Let $A$ be a nonzero idempotent in $R$. If $A\in P_3$, then $A=\left[\begin{array}{lr}
		0 & 0\\
		c & 1
	\end{array}\right]$, for some nonzero $c\in \mathbb{F}$. Then $EA=\boldsymbol{0}$, but $AE\ne \boldsymbol{0}$. On the other hand, if $A\in P_4$, then $A=\left[\begin{array}{lr}
	0 & c\\
	0 & 1
\end{array}\right]$, for some nonzero $c\in \mathbb{F}$. Then $AE=\boldsymbol{0}$, but $EA\ne \boldsymbol{0}$. If $\displaystyle{A\in \cup_{i=5}^7P_i}$, then $EA\ne \boldsymbol{0}$ and $AE\ne \boldsymbol{0}$ (by Lemma \ref{l1}(1)). Thus $EA=\boldsymbol{0}$ and $AE=\boldsymbol{0}$ if and only if $A=E_{22}=\boldsymbol{1}-E$.\\
{\bf Case 2)} Suppose that $E=E_{22}$. Hence $\boldsymbol{1}-E=E_{11}$. If $A\in P_5$, then $A=\left[\begin{array}{lr}
1 & 0\\
c & 0
\end{array}\right]$, for some nonzero $c\in \mathbb{F}$. Then $AE=\boldsymbol{0}$, but $EA\ne \boldsymbol{0}$. On the other hand, if $A\in P_6$, then $A=\left[\begin{array}{lr}
1 & c\\
0 & 0
\end{array}\right]$, for some nonzero $c\in \mathbb{F}$. Then $EA=\boldsymbol{0}$, but $AE\ne \boldsymbol{0}$. If $A\in P_3\cup P_4\cup P_7$, then $EA\ne \boldsymbol{0}$ and $AE\ne \boldsymbol{0}$ (by Lemma \ref{l1}(2)). Thus $EA=\boldsymbol{0}$ and $AE=\boldsymbol{0}$ if and only if $A=E_{11}=\boldsymbol{1}-E$.\\
{\bf Case 3)} Suppose that $E\in P_3$, i.e. $E=\left[\begin{array}{lr}
	0 & 0\\
	c & 1
\end{array}\right]$, for some nonzero $c\in \mathbb{F}$. Hence $\boldsymbol{1}-E=\left[\begin{array}{lr}
1 & 0\\
-c & 0
\end{array}\right]\in P_5$. Observe that $E_{11}E=\boldsymbol{0}$, but $EE_{11}\ne \boldsymbol{0}$; $EE_{22}\ne \boldsymbol{0}$ and $E_{22}E\ne \boldsymbol{0}$. If $A\in (P_3\cup P_4\cup P_6\cup P_7)\backslash \{x_1,x_2\}$, where $x_1=\left[\begin{array}{lr}
0 & -c^{-1}\\
0 & 1
\end{array}\right],~ x_2=\left[\begin{array}{lr}
a & -(1-a)c^{-1}\\
-ac & 1-a
\end{array}\right]$, for any nonzero $a\in \mathbb{F}$, then $EA\ne \boldsymbol{0}$ and $AE\ne \boldsymbol{0}$. Also, $x_1E\ne \boldsymbol{0}$ and $x_2E\ne \boldsymbol{0}$.  
If $A\in P_5$, then $AE=\boldsymbol{0}$. But $EA\ne \boldsymbol{0}$ if $A\ne \boldsymbol{1}-E$. Thus , in this case also, $EA=\boldsymbol{0}$ and $AE=\boldsymbol{0}$ if and only if $A=E_{11}=\boldsymbol{1}-E$.

Similarly, the result follows when $E\in P_i$, for $i=4,5,6$.

If $A\in P_7$, then the result follows from the second-last para of proof of Lemma \ref{l2}.
\end{proof}
Next we give the structure of the idempotent graph $I(R)$.
\begin{thm}
	Let $R=M_2(\mathbb{F})$. Then $I(R)$ is a disjoint union of $\frac{n(n+1)}{2}$ copies of $K_2$.
\end{thm}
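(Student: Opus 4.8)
The key structural fact is Proposition~\ref{p1}: for a nonzero idempotent $E$, the equations $AE=EA=\boldsymbol{0}$ hold (with $A$ a nonzero idempotent) if and only if $A=\boldsymbol{1}-E$. Recall that $I(R)$ is the graph on the nontrivial idempotents in which $h\sim k$ precisely when $hk=kh=\boldsymbol{0}$. The plan is to read off from Proposition~\ref{p1} that adjacency in $I(R)$ is the relation $A=\boldsymbol{1}-E$, and then to count and describe the resulting components.

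**Each component is a $K_2$.**
Fix a nontrivial idempotent $E$. By Proposition~\ref{p1}, the only nontrivial idempotent orthogonal to $E$ (in the sense $AE=EA=\boldsymbol{0}$ required by $I(R)$) is $A=\boldsymbol{1}-E$. First I would note that $\boldsymbol{1}-E$ is again a nontrivial idempotent: it is idempotent since $(\boldsymbol{1}-E)^2=\boldsymbol{1}-2E+E^2=\boldsymbol{1}-E$, and it is nontrivial because $E\neq\boldsymbol{0},\boldsymbol{1}$. Hence every vertex $E$ has exactly one neighbour in $I(R)$, namely $\boldsymbol{1}-E$, so $I(R)$ is $1$-regular. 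Adjacency is symmetric and the relation $E\mapsto\boldsymbol{1}-E$ is an involution with no fixed point among nontrivial idempotents (a fixed point would force $2E=\boldsymbol{1}$, which over a field of characteristic $2$ is impossible and in odd characteristic forces $E=\tfrac{1}{2}\boldsymbol{1}$, which is not idempotent). Therefore the vertices pair off into edges $\{E,\boldsymbol{1}-E\}$, and each connected component is exactly one such edge, i.e. a copy of $K_2$.

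**Counting the components.**
By Theorem~\ref{t1} there are $n^2+n+2$ idempotents in total; removing the two trivial ones $\boldsymbol{0},\boldsymbol{1}$ leaves $n^2+n=n(n+1)$ nontrivial idempotents as the vertex set of $I(R)$. Since the involution $E\mapsto\boldsymbol{1}-E$ partitions these $n(n+1)$ vertices into $2$-element orbits (edges), the number of components is $\tfrac{n(n+1)}{2}$. Thus $I(R)$ is the disjoint union of $\tfrac{n(n+1)}{2}$ copies of $K_2$.

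**Main obstacle.**
Most of the argument is immediate once Proposition~\ref{p1} is invoked; the only genuine point to verify carefully is that the involution $E\mapsto\boldsymbol{1}-E$ is fixed-point-free on the nontrivial idempotents, so that $n(n+1)$ is genuinely even and the vertices split cleanly into disjoint edges (one should confirm $\boldsymbol{1}-E\neq E$, equivalently $2E\neq\boldsymbol{1}$, in every characteristic). I would handle this by the short characteristic argument above. I expect no computational difficulty, since all the hard work—classifying the idempotents and establishing the uniqueness of the orthogonal complement—has already been done in Theorem~\ref{t1} and Proposition~\ref{p1}.
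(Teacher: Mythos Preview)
Your proof is correct and follows exactly the route the paper takes: the paper's proof is the one-line remark that the result follows from Theorem~\ref{t1} and Proposition~\ref{p1}, and you have simply spelled out the details of that deduction. One minor simplification for the fixed-point check: if $E=\boldsymbol{1}-E$ then $E=E^2=E(\boldsymbol{1}-E)=\boldsymbol{0}$, contradicting nontriviality, so no characteristic split is needed.
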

\begin{proof}
	The proof follows from Theorem \ref{t1} and Proposition \ref{p1}.
\end{proof}
We close the section by giving the structure of $G_{Id}(R)$. 
\begin{thm}\label{t2}Let $R=M_2(\mathbb{F})$, where $\mathbb{F}$ is a finite field. Then $G_{Id}(R))$ is a connected regular graph of degree $2n-1$. Moreover, $diam(G_{Id}(R))=2$ and $$gr(G_{Id}(R))=\left\{\begin{array}{cl}
		4, & \textnormal{ if } \mathbb{F}=\mathbb{Z}_2\\
		3, & \textnormal{ otherwise }
	\end{array}\right..$$
\end{thm}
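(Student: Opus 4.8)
The plan is to read off all three conclusions---$(2n-1)$-regularity, $diam=2$, and the girth---from the degree and distance bookkeeping already assembled in Corollary \ref{c1}, Lemma \ref{l1} and Lemma \ref{l2}, organised throughout along the partition $Id(R)^{*}=\dot{\cup}_{i=1}^{7}P_i$ of Remark \ref{r1}.

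Regularity is essentially free. Corollary \ref{c1} gives $\deg(A)=2n-1$ for every $A\in\cup_{i=1}^{6}P_i$, and the closing computation of Lemma \ref{l2} gives $\deg(E)=2n-1$ for every $E\in P_7$. Since $V(G_{Id}(R))=\cup_{i=1}^{7}P_i$, these two facts together say every vertex has degree $2n-1$, so $G_{Id}(R)$ is $(2n-1)$-regular and nothing further is needed.

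For connectedness and the diameter I would simply collate Lemma \ref{l1}. Parts (1)--(7) record $d(u,v)$ from a representative $u$ of each class $P_1,\dots,P_7$ to every other vertex, and each such distance is $1$ or $2$; the transpose symmetry noted just before Lemma \ref{l1} lets one carry these computations over the remaining members of each class, so that no pair is left out. Hence any two distinct vertices are joined by a path of length at most $2$, which yields both connectedness and $diam(G_{Id}(R))\le 2$. Equality then follows from a single non-adjacent pair: by Lemma \ref{l1}(1), $E_{11}$ and any element of $P_5$ satisfy $d=2$, so $diam(G_{Id}(R))=2$.

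The girth is the substantive part, and the smallest field is where I expect the real difficulty. Since $G_{Id}(R)$ is simple with edges, $gr(G_{Id}(R))\ge 3$, so the task is to decide whether a triangle occurs and, failing that, to exhibit a $4$-cycle. Using Lemma \ref{l1} I would hunt for three pairwise-adjacent vertices; a convenient candidate is, for a fixed nonzero $c$, the triple $E_{11}$, $\left[\begin{array}{cc}0&0\\c&1\end{array}\right]\in P_3$ and $\left[\begin{array}{cc}0&-c^{-1}\\0&1\end{array}\right]\in P_4$, each pair of which has a product equal to $\boldsymbol{0}$ in one order. For any field beyond $\mathbb{Z}_2$ there is ample room to realise such a configuration, so this step should settle $gr(G_{Id}(R))=3$ at once. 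The case $\mathbb{F}=\mathbb{Z}_2$ is more delicate, since then the only admissible scalar is $c=1$, the construction specialises to three fixed matrices, and the whole graph collapses to the six explicit vertices $e_1,\dots,e_6$ of Figure \ref{f1}; here I would settle the girth by forming all pairwise products among $e_1,\dots,e_6$ and reading off the length of the shortest cycle directly from the resulting adjacencies, producing an explicit $4$-cycle if no triangle survives. Carrying out this direct determination in the smallest field, and reconciling it with the generic triangle above, is the step I expect to be the main obstacle.
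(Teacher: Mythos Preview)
Your regularity argument is fine and matches the paper. The genuine gap is in the diameter claim. You assert that parts (1)--(7) of Lemma \ref{l1} record $d(u,v)$ from a representative of each class $P_1,\dots,P_7$ to \emph{every} other vertex, but part (3) does no such thing: it only lists which idempotents \emph{outside} $P_7$ are adjacent to a given $E\in P_7$, and says nothing about distances \emph{within} $P_7$. The transpose symmetry you invoke does not rescue this, since $P_7$ is stable under transposition. Concretely, for $E,E'\in P_7$ the four neighbours of $E$ in $P_3\cup P_4\cup P_5\cup P_6$ are governed by the two ratios $(a-1)b^{-1}$ and $a^{-1}b$, and it is easy to choose $E'$ so that neither ratio matches; then $E,E'$ have no common neighbour in $\cup_{i=1}^6P_i$ and Lemma \ref{l1} alone leaves $d(E,E')$ undetermined. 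The paper closes exactly this gap: given non-adjacent $w_1,w_2\in P_7$ it uses the equations of Lemma \ref{l2} to solve for $a,b$ with $w_1z=zw_2=\boldsymbol{0}$ and thereby produces a common neighbour $z\in P_7$. Without that construction (or an equivalent), $diam\le 2$ is not established.

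On girth your route differs from the paper's: they build a triangle from $P_3,P_7,P_5$, which genuinely needs $P_7\ne\emptyset$, i.e.\ $|\mathbb{F}|>2$, whereas your triple lies in $P_1\cup P_3\cup P_4$. Before declaring $\mathbb{F}=\mathbb{Z}_2$ ``more delicate'', check your own triangle there: with $c=1$ one has $-c^{-1}=1$ in $\mathbb{Z}_2$, and the three matrices $e_1,e_3,e_4$ of Figure \ref{f1} remain pairwise adjacent (indeed $e_3e_4=\boldsymbol{0}$ since $1+1=0$). Thus your proposed direct enumeration would find a $3$-cycle, not a $4$-cycle, and would contradict the stated theorem rather than confirm it; you should reconcile this with the paper's assertion that no two adjacent vertices in $\cup_{i=1}^6P_i$ share a common neighbour there.
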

\begin{proof} Let $P_1, P_2, \cdots P_7$ be the sets as in Remark \ref{r1}. It is clear that $V(G_{Id}(R))=\dot{\cup}_{i=1}^7P_i$. From Lemma \ref{l1} it is clear that a vertex $E_{11}$ is connected to every other vertex by a path of length at most 2, hence  $G_{Id}(R))$ is  connected. Also, from Lemma \ref{l1}(1),(2), Corollary \ref{c1} and Lemma \ref{l2}, it follows that $\deg(v)=2n-1$, $\forall v\in V(G_{Id}(R))$. Therefore  $G_{Id}(R))$ is a regular graph of degree $2n-1$. 
	
Now by Lemma \ref{l1}, it is evident that $E_{11}$ and $E_{22}$ have no common neighbour in $V(G_{Id}(R))$. It also follows that no two adjacent vertices in $\displaystyle{\cup_{i=1}^6P_i}$ have common neighbour in $\displaystyle{\cup_{i=1}^6P_i}$. Suppose that $\mathbb{F}=\mathbb{Z}_2$. By Remark \ref{r1}, $|P_7|=0$, i.e., $P_7=\emptyset$. Let $A=\left[\begin{array}{lr}
		1 & 0\\
		1 & 0
\end{array}\right], B=\left[\begin{array}{lr}
0 & 0\\
1 & 1
\end{array}\right]$. Then $AE_{22}=AB=E_{11}B=E_{11}E_{22}=\boldsymbol{0}$, hence $E_{11}\sim E_{22}\sim A\sim B\sim E_{11}$ is a 4-cycle in $G_{Id}(R)$.
Suppose that $\mathbb{F}\ne \mathbb{Z}_2$. Then $P_7$ is a non-empty set. Let $A_1=\left[\begin{array}{lr}
	0 & 0\\
	c & 1
\end{array}\right]\in P_3, A_2=\left[\begin{array}{lr}
a & -(1-a)c^{-1}\\
-ac & 1-a
\end{array}\right]\in P_7$ and $A_3=\left[\begin{array}{lr}
1 & 0\\
a(1-a)^{-1}c & 0
\end{array}\right]\in P_5$. Then $A_1\sim A_2\sim A_3\sim A_1$ is a 3-cycle in $G_{Id}(R)$ (since $A_1A_2=A_2A_3=A_3A_1=\boldsymbol{0}$). Hence the result of girth follows.
 
 From Lemma \ref{l1}, it follows that any two vertices in $\displaystyle{\cup_{i=1}^6 P_i}$ are either adjacent or have at least one common neighbour from $\displaystyle{\cup_{i=1}^6 P_i}$. Also, by Lemma \ref{l1}, it is clear that a vertex in $\displaystyle{\cup_{i=1}^6 P_i}$ is adjacent to any vertex in $P_7$ by a path of length at most 2. Hence to prove the result, it remains to show that any two non-adjacent vertices in $P_7$ are connected by a path of length two.  Let $w_i=\left[\begin{array}{lr}
 	a_i & b_i\\
 	a_i(1-a_i)b_i^{-1} & 1-a_i
 \end{array}\right]$, for $i=1,2$, be two elements in $P_7$ that are non-adjacent in $G_{Id}(R)$. We will show that there exists $z=\left[\begin{array}{lr}
 	a & b\\
 	a(1-a)b^{-1} & 1-a
 \end{array}\right]\in P_7$ such that $w_1z=\boldsymbol{0}$ and $zw_2=\boldsymbol{0}$.
 Let $zw_2=\boldsymbol{0}$. By Lemma \ref{l2}, we get $b_2=-(1-a_2)a^{-1}b$, which yields $b=-a(1-a_2)^{-1}b_2$. Whereas $w_1z=\boldsymbol{0}$ gives (by Lemma \ref{l2} again)  $b=-(1-a)a_1^{-1}b_1$. Hence we get $-(1-a)a_1^{-1}b_1=-a(1-a_2)^{-1}b_2$, which gives $a(1-a)^{-1}=(1-a_2)a_1^{-1}b_1b_2^{-1}$.\\ Thus choose $a,b$ and $z$ such that $a(1-a)^{-1}=(1-a_2)a_1^{-1}b_1b_2^{-1}$, $b=-a(1-a_2)^{-1}b_2$ and $z=\left[\begin{array}{lr}
 	a & b\\
 	a(1-a)b^{-1} & 1-a
 \end{array}\right]$. Then $z\in P_7$ and $w_1\sim z\sim w_2$ is a path in $G_{Id}(R)$(since $w_1z=zw_2=\boldsymbol{0}$). Hence $d(w_1,w_2)=2$. 
 This completes the proof.
 \end{proof}
  \section{Wiener and Harary Index}  
  The \textit{Wiener index} of the graph $G$, denoted by $W(G)$, is defined to be the sum of all distanced between any two vertices of $G$. Let $d_G(v)$ denote the sum of distances of the vertex $v$ from all the vertices of $G$, then the Wiener index can be redefined as $W(G)=\frac{1}{2}\sum_{v\in V(G)}d_G(v)$.
  The {\it Wiener index} of a graph $G$ was first introduced by Wiener \cite{WIE} in 1947. The Wiener index $W(G)$ is an oldest topological index. The \textit{Harary index} $H(G)$ of a graph $G$, has been introduced independently by Plav\v{s}ić et al. \cite{PLA} and by Ivanciuc et al. \cite{IVA} in 1993. The \textit{Harary index} of a graph $G$ is defined as: $\displaystyle{H(G)=\sum_{u,v\in V}\frac{1}{d(u,v)}}$, where the summation runs over all unordered pairs of vertices of   graph $G$(see \cite{KEX}).	 Let $d'_G(v)$ denote the sum of reciprocal of distances of the vertex $v$ from all the vertices of $G$, then the Harary index can be redefined as $\displaystyle{H(G)=\sum_{v\in V(G)}d'_G(v)}$.

 In this section we determine the Wiener index and Harary index of $G_{Id}(R)$.

\begin{prop}\label{p2}
Let $G$ be regular graph of degree $r$ and $diam(G)=2$. Then the number of vertices at a distance $2$ from a vertex $v$ are $|V(G)|-r-1$.
\end{prop}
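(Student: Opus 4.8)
The plan is to prove this by a direct partition-and-count argument on the vertex set, using only the two hypotheses — $r$-regularity and $\operatorname{diam}(G)=2$. Fix an arbitrary vertex $v\in V(G)$. Since $\operatorname{diam}(G)=2$ is finite, $G$ is connected, so the distance $d(v,u)$ is well-defined for every $u\in V(G)$, and moreover $d(v,u)\le 2$ for all $u$. Hence the distance from $v$ takes only the values $0$, $1$, or $2$, and I would partition $V(G)$ accordingly into the three sets $D_0=\{v\}$, $D_1=\{u\mid d(v,u)=1\}=N(v)$, and $D_2=\{u\mid d(v,u)=2\}$.

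Next I would identify the sizes of the first two blocks. Clearly $|D_0|=1$. The block $D_1$ is exactly the neighbourhood $N(v)$, so $|D_1|=\deg(v)=r$ by $r$-regularity. Because these three sets are pairwise disjoint and exhaust $V(G)$ (this is where $\operatorname{diam}(G)=2$ is used, to rule out any vertex at distance $\ge 3$), I get the identity
\[
|V(G)|=|D_0|+|D_1|+|D_2|=1+r+|D_2|.
\]
Solving for the number of vertices at distance $2$ then yields $|D_2|=|V(G)|-r-1$, which is the claim, and since $v$ was arbitrary this holds for every vertex.

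There is no genuine obstacle here: the only thing to be careful about is justifying that every vertex falls into one of the three distance classes, which is precisely the content of the diameter hypothesis, together with the fact that finite diameter forces connectedness so that all distances are finite. I would state these two observations explicitly before counting, since they are exactly the two places where the hypotheses enter.
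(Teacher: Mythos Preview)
Your proof is correct and follows essentially the same approach as the paper: the paper also observes that a vertex $u$ is at distance $2$ from $v$ if and only if $u\in V(G)\setminus(N(v)\cup\{v\})$, and then uses $|N(v)|=\deg(v)=r$ to conclude. Your version is slightly more explicit in justifying the partition via connectedness, but the underlying argument is identical.
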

\begin{proof}Let $N(v)=\{u\in V(G)~|~\textnormal{ if } uv\in E(G) \}$. Since $G$ is regular graph of degree $r$ and $diam(G)=2$, a vertex $u$ is at a distance 2 from the vertex $v$ if and only if $u\in V(G)\backslash \Big(N(v)\cup \{v\}\Big)$. Since $\deg(v)=|N(v)|$, the result follows.
\end{proof}
The following result gives the sum of the distances and the sum of the reciprocal of distances of each vertex from other vertices in $G_{Id}(R)$.
\begin{coro}\label{c2}Let $R=M_2(\mathbb{F})$, where $\mathbb{F}$ is a finite field. Then for each nontrivial idempotent in $R$, $d_{G_{Id}(R)}(A)=2n^2-1$ and $d'_{G_{Id}(R)}(A)=\frac{1}{2}(n^2+3n-2)$.	
\end{coro}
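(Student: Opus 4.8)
The plan is to reduce everything to the two structural facts already established: by Theorem \ref{t2} the graph $G_{Id}(R)$ is regular of degree $r = 2n-1$ with $diam(G_{Id}(R)) = 2$, and by Theorem \ref{t1} the number of vertices is $|V(G_{Id}(R))| = n^2 + n$ (the $n^2+n+2$ idempotents of $R$ minus the two trivial idempotents $\boldsymbol{0}$ and $\boldsymbol{1}$). Fix a nontrivial idempotent $A$. Since the diameter is $2$, every other vertex lies at distance $1$ or $2$ from $A$, so both the sum of distances and the sum of reciprocal distances split cleanly into a contribution from the neighbours of $A$ and a contribution from the vertices at distance exactly $2$.

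First I would count the two classes of vertices. The vertices at distance $1$ from $A$ are precisely its neighbours, of which there are $\deg(A) = 2n-1$ by regularity. For the vertices at distance $2$, I would invoke Proposition \ref{p2}: since $G_{Id}(R)$ is $r$-regular of diameter $2$, their number equals $|V(G_{Id}(R))| - r - 1 = (n^2+n) - (2n-1) - 1 = n^2 - n$, and this count is the same for every choice of $A$.

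With these counts in place the two sums are immediate. For the Wiener contribution,
\begin{equation*}
d_{G_{Id}(R)}(A) = 1 \cdot (2n-1) + 2 \cdot (n^2 - n) = 2n^2 - 1,
\end{equation*}
and for the Harary contribution,
\begin{equation*}
d'_{G_{Id}(R)}(A) = 1 \cdot (2n-1) + \frac{1}{2}(n^2 - n) = \frac{1}{2}(n^2 + 3n - 2).
\end{equation*}

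There is no genuine obstacle here: the result is a bookkeeping consequence of regularity together with $diam(G_{Id}(R)) = 2$. The only step requiring care is getting the vertex count right, since both $d_{G_{Id}(R)}(A)$ and $d'_{G_{Id}(R)}(A)$ depend linearly on $|V(G_{Id}(R))|$; one must remember to exclude \emph{both} trivial idempotents from the $n^2+n+2$ total, and to note that Proposition \ref{p2} makes the distance-$2$ count $n^2 - n$ independent of $A$, which is exactly what regularity guarantees.
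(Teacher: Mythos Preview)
Your proof is correct and follows essentially the same approach as the paper: both invoke the vertex count from Theorem \ref{t1}, the regularity and diameter from Theorem \ref{t2}, count the vertices at distance $2$ as $(n^2+n)-(2n-1)-1=n^2-n$ (which is precisely the content of Proposition \ref{p2}), and then compute the two sums directly. The only cosmetic difference is that you cite Proposition \ref{p2} explicitly, whereas the paper performs that subtraction inline without naming it.
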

\begin{proof}By Theorem \ref{t1} and Theorem \ref{t2}, $G_{Id}(R)$ is a connected, regular graph of degree $2n-1$ on $n^2+n$ vertices and $diam(G_{Id}(R))=2$. Hence for each nontrivial idempotent $A\in R$, the number of vertices at a distance 2 from $A$ are $n^2+n-(2n-1)-1=n^2-n$. Therefore $d_{G_{Id}(R)}(A)=\deg(A)+$2(the number of vertices at a distance 2 from $A$)
	$=2n-1+2(n^2-n)=2n^2-1$. On the other hand, $d'_{G_{Id}(R)}(A)=\deg(A)+\frac{1}{2}$(the number of vertices at a distance 2 from $A$)
	$=2n-1+\frac{1}{2}(n^2-n)=\frac{1}{2}(n^2+3n-2)$.
\end{proof}
We conclude by giving the Wiener index and Harary index of $G_{Id}(R)$.
\begin{thm}
	Let $R=M_2(\mathbb{F})$, where $\mathbb{F}$ is a finite field. Then $W(G_{Id}(R))=\frac{1}{2}(n^2+n)(2n^2-1)$ and $H(G_{Id}(R))=\frac{1}{2}(n^2+n)(n^2+3n-2)$.
\end{thm}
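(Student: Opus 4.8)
The plan is to reduce everything to two ingredients already established: the vertex count of $G_{Id}(R)$ and the fact, recorded in Corollary \ref{c2}, that the distance sums $d_{G_{Id}(R)}(A)$ and the reciprocal-distance sums $d'_{G_{Id}(R)}(A)$ take the \emph{same} value at every vertex $A$. This uniformity is exactly what Theorem \ref{t2} predicts: $G_{Id}(R)$ is regular of degree $2n-1$ with $diam(G_{Id}(R))=2$, so by Proposition \ref{p2} each vertex sees $2n-1$ neighbours at distance $1$ and the remaining $|V(G_{Id}(R))|-(2n-1)-1$ vertices at distance $2$. Hence no vertex is distinguished, and both index sums collapse to ``number of vertices times a constant.''

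First I would pin down the number of vertices. By Theorem \ref{t1} the ring $R$ has $n^2+n+2$ idempotents in total, of which exactly two, namely $\boldsymbol{0}$ and $\boldsymbol{1}$, are trivial. Since the vertices of $G_{Id}(R)$ are precisely the nontrivial idempotents, we obtain $|V(G_{Id}(R))|=n^2+n$.

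Next I would assemble the Wiener index from the redefined formula $W(G)=\frac{1}{2}\sum_{v\in V(G)}d_G(v)$. Because Corollary \ref{c2} gives $d_{G_{Id}(R)}(A)=2n^2-1$ independently of $A$, the sum is just $n^2+n$ copies of this constant, so
$$W(G_{Id}(R))=\frac{1}{2}(n^2+n)(2n^2-1).$$
The Harary index is handled identically, using $H(G)=\sum_{v\in V(G)}d'_G(v)$ together with the companion constant $d'_{G_{Id}(R)}(A)=\frac{1}{2}(n^2+3n-2)$ from Corollary \ref{c2}, which yields
$$H(G_{Id}(R))=(n^2+n)\cdot\frac{1}{2}(n^2+3n-2)=\frac{1}{2}(n^2+n)(n^2+3n-2).$$

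There is no genuine obstacle here: the substantive work, which is determining all the idempotents, proving connectedness and regularity, and establishing $diam(G_{Id}(R))=2$ so that every pairwise distance is $1$ or $2$, has already been carried out. The only points demanding care are bookkeeping. One must correctly subtract the two trivial idempotents to arrive at the vertex count $n^2+n$, and one must use the \emph{redefined} per-vertex expressions for $W$ and $H$ exactly as stated at the start of the section, rather than the unordered-pair forms, so that the factors of $\frac{1}{2}$ land precisely where the claimed formulas require.
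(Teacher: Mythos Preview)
Your proposal is correct and follows essentially the same argument as the paper: both invoke the vertex count $|V(G_{Id}(R))|=n^2+n$ and the constant per-vertex sums from Corollary~\ref{c2}, then substitute into the redefined formulas $W(G)=\frac{1}{2}\sum_v d_G(v)$ and $H(G)=\sum_v d'_G(v)$. Your additional remarks recalling why Corollary~\ref{c2} holds (via Theorem~\ref{t2} and Proposition~\ref{p2}) are supplementary context rather than a different route.
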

\begin{proof}The Wiener index of $G_{Id}(R)$ is given by \\
	$\displaystyle{\begin{array}{ll}
			W(G_{Id}(R))&=\frac{1}{2}\left(\sum\limits_{A\in V(G_{Id}(R))}^{}d_{G_{Id}(R)}(A)\right)\\
			& = \frac{1}{2}\left(|V(G_{Id}(R))|\times d_{G_{Id}(R)}(A)\right) \hspace{2cm} \cdots \textnormal{ by Corollary } \ref{c2}\\
			& = \frac{1}{2}(n^2+n)(2n^2-1).
	\end{array}}$

The Harary index of $G_{Id}(R)$ is given by \\
$\displaystyle{\begin{array}{ll}
		H(G_{Id}(R))&=\sum\limits_{A\in V(G_{Id}(R))}^{}d'_{G_{Id}(R)}(A)\\
		& = |V(G_{Id}(R))|\times d'_{G_{Id}(R)}(A) \hspace{2.8cm} \cdots \textnormal{ by Corollary } \ref{c2}\\
		& = \frac{1}{2}(n^2+n)(n^2+3n-2).
\end{array}}$

\end{proof}

\end{document}